\newtheorem{lem}{Lemma}
\newtheorem{fact}{Fact}
\newtheorem{thm}{Theorem}
\newtheorem{definition}{Definition}
\title{\textbf{The Entropy Influence Conjecture Revisited}}
\author{
Bireswar Das  \thanks{Indian Institute of Technology Gandhinagar, India. {\tt bireswar@iitgn.ac.in}} 
\and Manjish Pal \thanks{Indian Institute of Technology Gandhinagar, India. {\tt manjish\_pal@iitgn.ac.in}} 
\and Vijay Visavaliya \thanks{Vishwakarma Government Engineering College Gandhinagar, India. {\tt visavaliavijay@gmail.com}} }
\date{}
\begin{document}
 
\maketitle

\begin{abstract}
In this paper, we prove that most of the boolean functions, $f : \{-1,1\}^n \rightarrow \{-1,1\}$ 
satisfy the Fourier Entropy Influence (FEI) Conjecture due to Friedgut and Kalai (Proc. AMS'96)\cite{FG96}. The conjecture says that
the Entropy of a boolean function is at most a constant times the Influence of the function. The conjecture
has been proven for families of functions of smaller sizes. O'donnell, Wright and Zhou (ICALP'11)\cite{DWZ11} verified the
conjecture for the family of symmetric functions, whose size is $2^{n+1}$. They are in fact able to prove
the conjecture for the family of $d$-part symmetric functions for constant $d$, the size of whose is $2^{O(n^d)}$. Also it is known that the 
conjecture is true for a large fraction of polynomial sized DNFs (COLT'10)\cite{KLW10}. Using elementary methods we prove that 
a random function with high probability satisfies the conjecture with the constant as $(2 + \delta)$, for any constant $\delta > 0$. 
\end{abstract}

\section{Introduction}
The Entropy Influence Conjecture due to Friedgut and Kalai \cite{FG96} says that for every boolean function $f : \{-1,1\}^n \rightarrow \{-1,1\}$
the following holds,
\[
\displaystyle \sum_{S \subseteq [n]} \widehat{f}(S)^2 \log_2 \frac{1}{\widehat{f}(S)^2} \leq  C \cdot \sum_{S \subseteq [n]} \widehat{f}(S)^2 |S|
\]
for some universal constant $C > 0$ where $\widehat{f}(S)$ is the coefficient of $\chi_S(x)$ in the Fourier expansion of $f$.
The conjecture is of profound importance because of its potential impacts in areas like Learning Theory, Threshold Phenomena in monotone graph properties, metric embeddings  etc. For a detailed description of the impact and background
of the FEI conjecture the reader is recommended to read the Introduction section of \cite{DWZ11} and the blog post by Gil Kalai \cite{GK}.

The conjecture can be verified for simple functions like AND, OR, MAJORITY, Tribes etc. Although
posed about 15 years back we are not aware of a significantly large (of doubly exponential size) family
of functions which satisfies this conjecture. Klivans et al. \cite{KLW10} proved recently that a large fraction of polynomial sized DNF formulae
satisfy the Mansour's conjecture \cite{M94} which in turn implies that FEI conjecture is also true 
for those functions, a class  which has a size of $2^{poly(n)}$. In a recent resurrection of the FEI conjecture,
O'Donnell et al. \cite {DWZ11} proved the conjecture for the family of symmetric functions and $d$-part symmetric
functions for constant $d$. The sizes of these families are $2^{n+1}$ and $2^{O(n^d)}$ respectively, again only 
exponential in size. They also verified it for read-once decision trees which are of exponential size as well.
Thus, one is not aware of an explicit or non-explicit  family of doubly exponential size that satisfies the conjecture.
Very recently, in a note, Keller et al \cite{KMS11} managed to prove a variant of the conjecture for functions 
which have low Fourier weight on characters of large size.

\section{Preliminaries}
In this section we introduce the basic preliminaries of discrete Fourier Analysis which will be 
of interest for us. 

\begin{definition}
Let $f:\{-1,1\}^n \rightarrow \{-1,1\}$ be a boolean function. The \emph{Fourier expansion} of $f$ is written
as 
\[
\displaystyle f(x) = \sum_{S \subseteq [n]} \widehat{f}(S) \chi_S(x)
\]
where $\chi_S(x) = \displaystyle \Pi_{i \in S} x_i$.
\end{definition}

\begin{definition}
Let $f:\{-1,1\}^n \rightarrow \{-1,1\}$ be a boolean function. The \emph{entropy} of $f$ is defined as 
\[
\displaystyle \textbf{H}(f) = \sum_{S \subseteq [n]} \widehat{f}(S)^2 \log_2 \frac{1}{\widehat{f}(S)^2}
\]
\end{definition}

Notice by Parseval's identity, $\sum_{S \subseteq [n]} \widehat{f}(S)^2 = 1$ which implies 
that the Fourier coefficients can be thought of as a probability distribution and
hence $\textbf{H}(f)$ gives us the entropy of that distribution. The following
fact (see page 40 of \cite{CT91}) gives an upper bound on $\textbf{H}(f)$ which will be of use for us later.

\begin{fact} \label{a}
For an arbitrary boolean function $f$, $\textbf{H}(f) \leq n$.
\end{fact}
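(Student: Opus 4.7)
The plan is to recognize that, by Parseval's identity, the squared Fourier coefficients $\{\widehat{f}(S)^2 : S \subseteq [n]\}$ form a probability distribution on the $2^n$ subsets of $[n]$, and then apply the standard maximum-entropy bound that the Shannon entropy of any distribution on a finite set of size $N$ is at most $\log_2 N$.

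Concretely, I would first write $p_S := \widehat{f}(S)^2$, so that $\sum_{S \subseteq [n]} p_S = 1$ and $\textbf{H}(f) = \sum_{S} p_S \log_2(1/p_S)$ is exactly the Shannon entropy of the probability distribution $\{p_S\}$ on the $2^n$-element set $2^{[n]}$. Then I would invoke (or briefly prove, via Jensen's inequality applied to the concave function $\log_2$) the inequality
\[
\sum_{S \subseteq [n]} p_S \log_2 \frac{1}{p_S} \;\leq\; \log_2 |2^{[n]}| \;=\; \log_2 2^n \;=\; n,
\]
which is just the statement that among all distributions on a set of size $2^n$, the uniform distribution maximizes entropy. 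A clean one-line justification is Gibbs' inequality: the Kullback–Leibler divergence $D(p \,\|\, u)$ of $p$ from the uniform distribution $u$ on $2^{[n]}$ is non-negative, and expanding it yields $n - \textbf{H}(f) \geq 0$.

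There is no real obstacle here; the only thing to be careful about is the convention for terms with $p_S = 0$, which should be treated as $0 \log_2 \infty = 0$ in the usual way (this is continuous from the right), so such summands do not contribute and the bound goes through unchanged. This gives $\textbf{H}(f) \leq n$, as claimed.
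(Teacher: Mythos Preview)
Your proposal is correct and is precisely the standard argument the paper has in mind: the paper does not actually give a proof of this fact but simply cites Cover and Thomas \cite{CT91}, where the bound is exactly the maximum-entropy inequality $H(p) \leq \log_2 N$ for a distribution on $N = 2^n$ points, applied to the probability distribution $\{\widehat{f}(S)^2\}_{S \subseteq [n]}$ furnished by Parseval. Your Jensen/Gibbs justification is the usual one, and your remark about the $p_S = 0$ convention is appropriate.
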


\begin{definition}
For a boolean function $f:\{-1,1\}^n \rightarrow \{-1,1\}$, $\textbf{Inf}_i(f)$, the \emph{Influence of coordinate $i$} is defined
as 
\[
\displaystyle \textbf{Inf}_i(f) = \sum_{S:i\in S}  \widehat{f}(S)^2
\]
and the \emph{Influence of $f$}, $\textbf{Inf}(f)$, is defined as 
\[
\displaystyle \textbf{Inf}(f) = \sum_{i=1}^n \textbf{Inf}_i =  \sum_{S \subseteq [n]} \widehat{f}(S)^2 |S|
\]
\end{definition}

An equivalent combinatorial interpretation of the influence of the $i$th coordinate is given by $\textbf{Inf}_i=\mbox{Pr}_x[f(x)\neq f(x^{(i)})]$ where $x^{(i)}$ is $x$ with the $i$th coordinate flipped. 

~

We will also use the following well-known facts regarding the Fourier
coefficients.

\begin{fact}
For a boolean function $f:\{-1,1\}^n \rightarrow \{-1,1\}$, the following holds for any subset $S\subseteq [n]$
\begin{itemize}
\item[1.]  $\displaystyle \widehat{f}(S) = \frac{1}{2^{n}} \sum_{x} f(x)\chi_{S}(x)$.
\item[2.]  $\displaystyle \sum_{x} \chi_S(x) = 0$ if $S \neq \phi$, and $2^{n}$ if $S = \phi$.
\end{itemize}
\end{fact}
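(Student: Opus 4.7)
The plan is to prove part 2 first, since part 1 follows from it via the orthogonality of the characters. Part 2 is a product--sum factoring argument. For $S = \emptyset$ the summand $\chi_\emptyset(x) = 1$ (empty product) gives sum $2^n$. For $S \neq \emptyset$, I would write $\chi_S(x) = \prod_{i \in S} x_i$ and decompose the sum over $\{-1,1\}^n$ coordinate by coordinate, so that it equals $\prod_{i \in S}(1 + (-1)) \cdot \prod_{i \notin S}(1 + 1)$; since $S$ is nonempty, at least one factor is zero, so the whole product vanishes.

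For part 1, the key identity is $\chi_S(x)\chi_T(x) = \chi_{S \triangle T}(x)$, which is immediate from $x_i^2 = 1$ on $\{-1,1\}$. I would substitute the Fourier expansion $f(x) = \sum_T \widehat{f}(T)\chi_T(x)$ into the right-hand side of part 1, interchange the order of summation, and apply part 2 to the inner sum $\sum_x \chi_{S \triangle T}(x)$. Only the term $T = S$ survives, contributing $\widehat{f}(S) \cdot 2^n$, and dividing through by $2^n$ yields the formula.

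Neither part presents a real obstacle; both are standard orthogonality computations. The one subtle point is that part 1 tacitly assumes the Fourier expansion of Definition~1 actually exists and is unique, so strictly speaking one should justify this before invoking it. That justification amounts to observing that $\{\chi_S : S \subseteq [n]\}$ is a family of $2^n$ pairwise orthogonal vectors (the orthogonality again following from part 2 applied to $\chi_S \chi_T$) in the $2^n$-dimensional real vector space of functions on $\{-1,1\}^n$, hence an orthonormal basis with respect to the inner product $\langle f, g \rangle = 2^{-n} \sum_x f(x)g(x)$. Once this is in place, the argument for part 1 above is essentially a one-line computation.
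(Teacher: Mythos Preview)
Your argument is correct and entirely standard: the factorization for part~2 and the orthogonality-based derivation of part~1 are exactly how one proves these identities. Note, however, that the paper does not actually prove this statement; it is labeled a \emph{Fact} and introduced as ``well-known,'' with no proof supplied. So there is nothing to compare against --- your write-up simply fills in background that the authors chose to take for granted.
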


\section{The Result}

The main contribution of our paper is to prove that there is a 
large (non-explicit) family of functions that satisfies the entropy
influence conjecture. The size of this family is significantly
larger than the size of any of the known families for which
the FEI conjecture is known to be true. More precisely, we show that there
is a family of functions whose size is
\[
 \displaystyle \left[1 - 4\left(1 + \frac{2}{\delta} \right)^2 \frac{1}{2^{n+1}n} \right]\cdot 2^{2^n}
\]
satisfies the conjecture with $C = 2 + \delta$ for any constant $\delta > 0$. Of course, our result
doesn't provide a step towards the resolving the FEI conjecture as it is known that the there
are functions which need the constant $C$ to be at least 4.6 \cite{DWZ11}.

\begin{thm}\label{thm1}
A random function satisfies the FEI  conjecture with high probability with $C = 2 + \delta$,  for any constant $\delta > 0$.

\end{thm}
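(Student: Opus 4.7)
The strategy is to bound entropy from above by the trivial bound $\mathbf{H}(f)\le n$ given in Fact \ref{a}, and then show that for a uniformly random $f:\{-1,1\}^n\to\{-1,1\}$ we have $\mathbf{Inf}(f)\ge n/(2+\delta)$ with probability at least $1-4(1+2/\delta)^2/(n\cdot 2^{n+1})$. Putting the two together gives
\[
\mathbf{H}(f)\ \le\ n\ \le\ (2+\delta)\cdot \mathbf{Inf}(f),
\]
which is exactly the FEI conjecture with constant $C=2+\delta$. So the whole problem is reduced to an anti-concentration statement for $\mathbf{Inf}(f)=\sum_{S}\widehat{f}(S)^2 |S|$, which I would attack with Chebyshev.

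The first moment is easy. Writing $\widehat{f}(S)=2^{-n}\sum_x f(x)\chi_S(x)$, the independence and mean-zero-ness of the values $f(x)\in\{\pm 1\}$ give $\mathbb{E}[\widehat{f}(S)^2]=2^{-n}$, and therefore $\mathbb{E}[\mathbf{Inf}(f)]=2^{-n}\sum_{k=0}^n \binom{n}{k}k = n/2$. For the second moment I would expand
\[
\mathbb{E}\bigl[\widehat{f}(S)^2\widehat{f}(T)^2\bigr]\ =\ \frac{1}{2^{4n}}\sum_{x,y,u,v}\mathbb{E}[f(x)f(y)f(u)f(v)]\,\chi_S(x)\chi_S(y)\chi_T(u)\chi_T(v),
\]
using that $\mathbb{E}[f(x)f(y)f(u)f(v)]$ is nonzero only when the multiset $\{x,y,u,v\}$ pairs up, which by inclusion-exclusion on the three possible pairings produces three diagonal sums minus twice the full-diagonal correction. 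Using that $\sum_x \chi_{S\Delta T}(x)=2^n\cdot\mathbb{1}[S=T]$, the cross terms collapse and one gets
\[
\mathrm{Cov}\bigl(\widehat{f}(S)^2,\widehat{f}(T)^2\bigr)\ =\ \begin{cases}\ 2\cdot 2^{-2n}-2\cdot 2^{-3n} & S=T,\\[2pt] -2\cdot 2^{-3n} & S\neq T.\end{cases}
\]
Summing against $|S||T|$ and using the standard identities $\sum_S|S|=n2^{n-1}$ and $\sum_S|S|^2 = n(n+1)2^{n-2}$ gives, after cancellation, the clean bound $\mathrm{Var}(\mathbf{Inf}(f))=n/2^{n+1}$.

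The finish is Chebyshev: setting $t=n\delta/(2(2+\delta))$, so that the event $\mathbf{Inf}(f)<n/(2+\delta)$ forces $|\mathbf{Inf}(f)-n/2|\ge t$, one obtains
\[
\Pr\!\left[\mathbf{Inf}(f)<\tfrac{n}{2+\delta}\right]\ \le\ \frac{n/2^{n+1}}{t^2}\ =\ \frac{4(2+\delta)^2}{\delta^2\, n\, 2^{n+1}}\ =\ \frac{4(1+2/\delta)^2}{n\cdot 2^{n+1}},
\]
which is precisely the failure probability promised in the family size displayed just before the theorem.

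I expect the main technical obstacle to be the second-moment bookkeeping: the four-point expectation $\mathbb{E}[f(x)f(y)f(u)f(v)]$ has three pairing patterns plus an overlap at $x=y=u=v$, and one has to be careful not to double count. Everything else is an elementary application of Parseval, Chebyshev, and Fact \ref{a}; the only place where the delicate cancellation between the $\sum_S|S|^2$ term and the $(\sum_S|S|)^2$ term occurs is exactly where the variance drops from $\Theta(n^2/2^n)$ to $\Theta(n/2^n)$, which is what makes the tail bound strong enough to produce the doubly-exponentially-large family claimed.
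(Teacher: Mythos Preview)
Your proposal is correct and follows essentially the same approach as the paper: bound entropy by $n$ via Fact~\ref{a}, compute $\mathbf{E}[I_r]=n/2$ and $\mathrm{Var}(I_r)=n/2^{n+1}$ exactly as in Lemmas~\ref{lem1} and~\ref{lem2}, and then apply Chebyshev to the influence to get the stated failure probability $4(1+2/\delta)^2/(n\cdot 2^{n+1})$. The only cosmetic difference is that the paper applies Chebyshev to the scaled variable $(2+2\epsilon)I_r$ with $\delta=2\epsilon$, whereas you apply it directly to $I_r$ with deviation $t=n\delta/(2(2+\delta))$; the resulting bounds are identical.
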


Consider random function which puts values $1$ or $-1$ independently on every point of $\{1,-1\}^n$ with an equal probability of $1/2$. Clearly, every function is obtained with a probability of $\frac{1}{2^{2^n}}$. Let $H_r$ and $I_r$ be random variables denoting the entropy and influence of a randomly chosen function as above.  We will prove Theorem \ref{thm1} using a simple application of Chebyshev Inequality.
We will use the following two lemmas in our proof.

\begin{lem}\label{lem1}
  $\textbf{E}[I_r]=\frac{n}{2}$.
\end{lem}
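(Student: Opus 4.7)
The plan is to compute $\mathbf{E}[\widehat{f}(S)^2]$ for each $S$ and then use linearity of expectation against the Fourier formula $\textbf{Inf}(f) = \sum_{S} \widehat{f}(S)^2 |S|$. Since every step of the calculation is short, the main work is just making sure the random structure of $f$ is exploited correctly.

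First I would expand one Fourier coefficient using the formula from Fact~2 as
\[
\widehat{f}(S)^2 = \frac{1}{2^{2n}} \sum_{x,y \in \{-1,1\}^n} f(x) f(y) \chi_S(x) \chi_S(y).
\]
Taking expectation over the random choice of $f$, and using that the values $f(x)$ are independent uniform $\pm 1$ random variables, we get $\mathbf{E}[f(x)f(y)] = \mathbf{1}[x=y]$. The diagonal contribution collapses since $\chi_S(x)^2 = 1$, giving $\mathbf{E}[\widehat{f}(S)^2] = 2^n / 2^{2n} = 1/2^n$, independent of $S$ (which is, incidentally, the natural ``uniform Fourier distribution'' for a random function, consistent with Parseval).

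Then by linearity of expectation,
\[
\mathbf{E}[I_r] \;=\; \sum_{S \subseteq [n]} |S| \cdot \mathbf{E}[\widehat{f}(S)^2] \;=\; \frac{1}{2^n} \sum_{S \subseteq [n]} |S| \;=\; \frac{1}{2^n} \sum_{k=0}^n k \binom{n}{k} \;=\; \frac{n \cdot 2^{n-1}}{2^n} \;=\; \frac{n}{2}.
\]
As a sanity check, one can reach the same answer from the combinatorial description $\textbf{Inf}_i(f) = \Pr_x[f(x) \neq f(x^{(i)})]$: for independent coordinates $x$ and $x^{(i)}$ the values $f(x)$ and $f(x^{(i)})$ are independent uniform $\pm 1$, so this probability is $1/2$ in expectation, and summing over $i \in [n]$ gives $n/2$.

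I do not expect any real obstacle here; the argument is essentially a one-line second moment computation. The only thing worth being careful about is keeping the probability space clean (the randomness is over $f$, and separately the sum over $x,y$ is deterministic), so that when the cross terms $x \neq y$ are discarded we are genuinely using the independence of distinct coordinates of $f$ rather than any symmetry of $\chi_S$. The Fourier-based derivation is preferable to the combinatorial one since the same setup (computing $\mathbf{E}[\widehat{f}(S)^2]$ and, for variance, $\mathbf{E}[\widehat{f}(S)^4]$) will be reused when controlling $\mathrm{Var}[I_r]$ in the Chebyshev step for Theorem~\ref{thm1}.
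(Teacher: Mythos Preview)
Your proof is correct and follows essentially the same route as the paper: expand $\widehat{f}(S)^2$ via Fact~2, use independence of the values $f(x)$ to kill the off-diagonal terms, obtain $\mathbf{E}[\widehat{f}(S)^2]=2^{-n}$, and sum $\sum_k k\binom{n}{k}=n2^{n-1}$. The combinatorial sanity check via $\textbf{Inf}_i(f)=\Pr_x[f(x)\neq f(x^{(i)})]$ is a nice addition not present in the paper.
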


\begin{proof}
\begin{eqnarray*}
\textbf{E}[I_r] &=& \textbf{E}\left[ \sum_{S \subseteq [n]} \widehat{f}(S)^2 |S|\right] \\
&=& \sum_{S \subseteq [n]} \textbf{E}[\widehat{f}(S)^2] |S| \\
&=& \sum_{S \subseteq [n]} \frac{1}{2^{2n}} \textbf{E}\left[ \sum_x f(x)^2 \chi_S(x)^2 + \sum_{x \neq y} f(x)f(y)\chi_S(x) \chi_S(y)  \right] |S| 
\end{eqnarray*}  
\begin{eqnarray*}
&=&  \frac{1}{2^{2n}}\sum_{S \subseteq [n]} \left( \sum_x 1 + \sum_{x \neq y} \textbf{E} \left[f(x)f(y)\right] \chi_S(x) \chi_S(y) \right) |S| \\
&=&  \frac{1}{2^{2n}} \sum_{S \subseteq [n]} 2^n |S|+  0  \mbox {    (this follows because for $x \neq y$, $\textbf{E}[f(x)f(y)] = \textbf{E}[f(x)]\textbf{E}[f(y)] = 0$ )} \\
&=&  \frac{1}{2^{n}}  \sum_{k =0 }^n  \binom{n}{k} k = \frac{n 2^{n-1}}{2^n} = \frac{n}{2}
\end{eqnarray*}  
\end{proof}

\begin{lem}\label{lem2}
  $\textbf{Var}[I_r]=\frac{n}{2^{n+1}}$.
\end{lem}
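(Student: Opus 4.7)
The plan is to use the standard decomposition $\textbf{Var}[I_r]=\textbf{E}[I_r^2]-(\textbf{E}[I_r])^2$; by Lemma~\ref{lem1} the second term is $n^2/4$, so the whole problem reduces to evaluating $\textbf{E}[I_r^2]$ in closed form.

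Expanding each Fourier coefficient via $\widehat{f}(S)=\frac{1}{2^n}\sum_x f(x)\chi_S(x)$, I would write
\[
I_r^2=\frac{1}{2^{4n}}\sum_{S,T\subseteq[n]}|S||T|\sum_{x,y,z,w}f(x)f(y)f(z)f(w)\,\chi_S(x)\chi_S(y)\chi_T(z)\chi_T(w),
\]
where the inner sum runs over $(x,y,z,w)\in(\{-1,1\}^n)^4$. Since the $f(x)$'s are i.i.d.\ uniform $\pm 1$, $\textbf{E}[f(x)f(y)f(z)f(w)]$ equals $1$ exactly when every element of the multiset $\{x,y,z,w\}$ has even multiplicity and equals $0$ otherwise. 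The non-vanishing tuples therefore form $A_1\cup A_2\cup A_3$ with $A_1=\{x{=}y,\,z{=}w\}$, $A_2=\{x{=}z,\,y{=}w\}$, $A_3=\{x{=}w,\,y{=}z\}$; every pairwise (and the triple) intersection collapses to the diagonal $\{x{=}y{=}z{=}w\}$.

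Inclusion--exclusion on $A_1,A_2,A_3$ now makes the character sums collapse. On $A_1$ the inner sum factors as $(\sum_x\chi_S(x)^2)(\sum_z\chi_T(z)^2)=2^{2n}$, independent of $S,T$. On $A_2$ and $A_3$ it becomes $(\sum_x\chi_{S\triangle T}(x))^2$, which by Fact~2 equals $2^{2n}$ if $S=T$ and $0$ otherwise. On the diagonal it equals $\sum_x\chi_S(x)^2\chi_T(x)^2=2^n$. Assembling these three evaluations yields the compact identity
\[
\textbf{E}\bigl[\widehat{f}(S)^2\widehat{f}(T)^2\bigr]=\frac{1+2\cdot[S=T]}{2^{2n}}-\frac{2}{2^{3n}}.
\]

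Substituting this back into the double sum and using $\sum_S|S|=n\,2^{n-1}$ (already exploited in Lemma~\ref{lem1}) together with the standard binomial identity $\sum_S|S|^2=\sum_{k=0}^n\binom{n}{k}k^2=n(n+1)2^{n-2}$, the three resulting pieces evaluate to $n^2/4$, $n(n+1)/2^{n+1}$, and $-n^2/2^{n+1}$, giving $\textbf{E}[I_r^2]=n^2/4+n/2^{n+1}$; subtracting $n^2/4$ yields the claimed variance. The only real obstacle is the bookkeeping of the three overlapping pair-patterns so that the diagonal contribution is neither missed nor over-counted; once the closed-form identity for $\textbf{E}[\widehat{f}(S)^2\widehat{f}(T)^2]$ is in hand, everything else is elementary binomial arithmetic.
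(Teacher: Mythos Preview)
Your proposal is correct and follows essentially the same approach as the paper: both compute $\textbf{E}[I_r^2]$ by expanding the Fourier coefficients, classify the surviving quadruples via the same three pairing patterns $A_1,A_2,A_3$ with inclusion--exclusion over their common diagonal, derive the identical closed form for $\textbf{E}[\widehat{f}(S)^2\widehat{f}(T)^2]$ (your $\frac{1+2[S=T]}{2^{2n}}-\frac{2}{2^{3n}}$ is just a compact rewriting of the paper's two-case expression), and finish with the same binomial sums. The only differences are cosmetic notation ($x,y,z,w$ versus $x_1,y_1,x_2,y_2$) and your slightly cleaner packaging of the final arithmetic.
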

\begin{proof} We have already calculated $\textbf{E}[I_r]$. To calculate, 
\begin{eqnarray*}
\textbf{Var}[I_r] &=& \textbf{E}[I_r^2] - (\textbf{E}[I_r])^2
\end{eqnarray*}
 we need to calculate $\textbf{E}[I_r^2]$.

\begin{eqnarray*}
\textbf{E}[I_r^2] &=& \textbf{E}\left[ \left(\sum_{S \subseteq [n]} \widehat{f}(S)^2 |S|\right)^2 \right] \\
&=& \textbf{E}\left[ \sum_{S_1, S_2 \subseteq [n]}  \widehat{f}(S_1)^2  \widehat{f}(S_2)^2 |S_1| |S_2| \right] \\
&=& \sum_{S_1 , S_2 \subseteq [n]}  \textbf{E}  \left[\widehat{f}(S_1)^2  \widehat{f}(S_2)^2\right]  |S_1| |S_2|
\end{eqnarray*}

\begin{eqnarray*}
 \textbf{E}  \left[\widehat{f}(S_1)^2  \widehat{f}(S_2)^2\right] &=&\frac{1}{2^{4n}}  \textbf{E} \left[\left(\sum_x f(x) \chi_{S_1}(x) \right)^2 \left(\sum_x f(x) \chi_{S_2}(x)\right)^2\right] \\
 &= & \frac{1}{2^{4n}} \textbf{E} \left[ \sum_{x_1,y_1,x_2,y_2} f(x_1)f(y_1)f(x_2)f(y_2) \chi_{S_1}(x_1) \chi_{S_1}(y_1) \chi_{S_2}(x_2) \chi_{S_2}(y_2) \right]\\
 &= & \frac{1}{2^{4n}} \sum_{x_1,y_1,x_2,y_2} \textbf{E} \left[  f(x_1)f(y_1)f(x_2)f(y_2) \right] \chi_{S_1}(x_1) \chi_{S_1}(y_1) \chi_{S_2}(x_2) \chi_{S_2}(y_2) 
\end{eqnarray*}
To calculate the above sum, consider the following sets,
\begin{eqnarray*}
A_1 = \{(x_1,y_1,x_2,y_2) | x_1 = y_1, x_2 = y_2\} \\
A_2 = \{(x_1,y_1,x_2,y_2) | x_1 = x_2, y_1 = y_2\} \\
A_3 = \{(x_1,y_1,x_2,y_2) | x_1 = y_2, x_2 = y_1\} \\
\end{eqnarray*} 
Notice the following properties of $A_1, A_2, A_3$, $|A_1| = |A_2| = |A_3| = 2^{2n}, A_1 \cap A_2 = A_2 \cap A_3 = A_3 \cap A_1 = A_1 \cap A_2 \cap A_3$
and $|A_1 \cap A_2 \cap A_3| = 2^{n}$.
It is easy to verify that if  $(x_1,y_1,x_2,y_2) \notin A_1 \bigcup A_2 \bigcup A_3$, then 
$\textbf{E} \left[ f(x_1)f(y_1)f(x_2)f(y_2) \right]  = 0$.
Otherwise it is $\chi_{S_1}(x_1) \chi_{S_1}(y_1) \chi_{S_2}(x_2) \chi_{S_2}(y_2)$.
Using the above properties and inclusion exclusion principle we have $ \textbf{E}  \left[\widehat{f}(S_1)^2  \widehat{f}(S_2)^2\right]$ equal to 

\begin{eqnarray*}
& & \frac{1}{2^{4n}} \left[ \sum_{A_2}  \chi_{S_1}(x_1) \chi_{S_1}(y_1) \chi_{S_2}(x_2) \chi_{S_2}(y_2) + \sum_{A_3}  \chi_{S_1}(x_1) \chi_{S_1}(y_1) \chi_{S_2}(x_2) \chi_{S_2}(y_2) \right] +  \\
&&   \frac{1}{2^{4n}} \left[ \sum_{A_1}  \chi_{S_1}(x_1) \chi_{S_1}(y_1) \chi_{S_2}(x_2) \chi_{S_2}(y_2)\right ]  - 2 \cdot \frac{1}{2^{4n}} \left[ \sum_{A_1 \cap A_2 \cap A_3}  \chi_{S_1}(x_1) \chi_{S_1}(y_1) \chi_{S_2}(x_2) \chi_{S_2}(y_2)\right ] \\
&=& \frac{1}{2^{4n}} \left[ \sum_{x,y}  \chi_{S_1}(x) \chi_{S_1}(y) \chi_{S_2}(x) \chi_{S_2}(y) + \sum_{x,y}  \chi_{S_1}(x) \chi_{S_1}(y) \chi_{S_2}(y) \chi_{S_2}(x) \right] +  \\
&&   \frac{1}{2^{4n}} \left[\sum_{x,y}  \chi_{S_1}(x) \chi_{S_1}(x) \chi_{S_2}(y) \chi_{S_2}(y) \right ]  - 2 \cdot \frac{1}{2^{4n}} \cdot 2^n \\
&=& \frac{1}{2^{4n}} \left[2 \sum_{x,y}  \chi_{S_1 \Delta S_2}(x) \chi_{S_1 \Delta S_2}(y) + 2^{2n} - 2 \cdot 2^{n}\right].
\end{eqnarray*}
Thus, using the fact that $ \sum_x \chi_{S_1 \Delta S_2}(x) = 0$ if $S_1 \neq S_2$ and $2^n$ otherwise, we have
\[ \textbf{E}  \left[\widehat{f}(S_1)^2  \widehat{f}(S_2)^2\right] = \left\{ 
\begin{array}{l l}
  \frac{1}{2^{4n}} \cdot (2^{2n} - 2\cdot 2^{n}) & \quad \mbox{if $S_1 \neq S_2$.}\\
   \frac{1}{2^{4n}} \cdot (3\cdot 2^{2n} -  2\cdot 2^{n}) & \quad \mbox{otherwise.}\\ \end{array}\right. \]

Therefore,
\begin{eqnarray*}
\textbf{E}[I_r^2] &=& \sum_{S_1, S_2 \subseteq [n]}  \textbf{E}  \left[\widehat{f}(S_1)^2  \widehat{f}(S_2)^2\right]  |S_1| |S_2| \\
&=&  \sum_{S_1= S_2}  \textbf{E}  \left[\widehat{f}(S_1)^2  \widehat{f}(S_2)^2\right]  |S_1| |S_2| +  \sum_{S_1\neq S_2}  \textbf{E}  \left[\widehat{f}(S_1)^2  \widehat{f}(S_2)^2\right]  |S_1| |S_2| \\
&=&\displaystyle   \sum_{S_1= S_2}  \frac{3 \cdot 2^{2n} - 2\cdot 2^{n}}{2^{4n}} |S_1| |S_2| +  \sum_{S_1\neq S_2}  \frac{ 2^{2n} - 2\cdot 2^{n}}{2^{4n}} |S_1| |S_2| \\
&= & \displaystyle  \sum_{S_1= S_2}  \frac{2 \cdot 2^{2n}}{2^{4n}} |S_1| |S_2| + \sum_{S_1, S_2}  \frac{ 2^{2n} - 2\cdot 2^{n}}{2^{4n}} |S_1| |S_2| \\
&=& \displaystyle  \frac{2 \cdot 2^{2n}}{2^{4n}} \sum_{k=0}^{n} \binom{n}{k} k^2 + \frac{ 2^{2n} - 2\cdot 2^{n}}{2^{4n}} \left( \sum_{k=0}^{n} \binom{n}{k} k\right)^2 \\
&=&  \displaystyle \frac{2(n(n-1)2^{n-2} + n2^{n-1})}{2^{2n}} + \left( \frac{1}{2^{2n}} - \frac{2}{2^{3n}} \right)(n2^{n-1})^2 \\
&=& \displaystyle \frac{n}{2^{n+1}} + \frac{n^2}{4}.
\end{eqnarray*}
Hence,
\[
\textbf{Var}[I_r] = \frac{n}{2^{n+1}}.
\]  
\end{proof}
We are now ready to prove Theorem \ref{thm1}.
\begin{proof}(Theorem \ref{thm1})
We will prove this using simple applications of Chebyshev inequality. As mentioned earlier we pick a random function which puts values $-1$ or $1$ independently on every point of $\{-1,1\}^n$ with an equal probability of $1/2$. Recall that, $H_r$ and $I_r$ are random variables denoting the entropy and influence of a randomly chosen function. Let us define the event ${\cal E}_{FEI}$ indicating that the randomly chosen boolean function satisfies the FEI conjecture with the constant $C= 2 + 2\epsilon$ for $\epsilon > 0$. More precisely, ${\cal E}_{FEI}$ is the event that $H_r\leq (2 + 2\epsilon)I_r$. Our aim is to prove that
 ${\cal E}_{FEI}$ occurs with high probability. 
 From Fact \ref{a}, we have $H_r\leq n$.  Therefore,
\[
\mbox{Pr[$H_r >  n $]} = 0. 
\]
Consider the following events:
\[  
{\cal E}_1 := H_r > n
\]
\[
{\cal E}_2 := (2 + 2\epsilon) I_r  \leq n 
\]
Now if $H_r > (2 + 2\epsilon) I_r$, then either ${\cal E}_1$ or ${\cal E}_2$ happens. Therefore by union bound,
\[
 \mbox{Pr}[H_r > (2 + 2\epsilon) I_r] \leq \mbox{Pr}[{\cal E}_1] + \mbox{Pr}[{\cal E}_2]
\]

Since from Lemma \ref{lem1}, $\textbf{E}[I_r] = n/2$, $\textbf{E}[ (2 + 2\epsilon) I_r] = (1 + \epsilon) n$. Now we upper bound $\mbox{Pr}[{\cal E}_2]$.
\[ 
\mbox{Pr}[{\cal E}_2] = \mbox{Pr}[(2 + 2\epsilon) I_r \leq n] = \mbox{Pr}[(2 + 2\epsilon)I_r - (1+\epsilon) n \leq -\epsilon n]  \leq \mbox{Pr}[|(2 + 2\epsilon)I_r - (1+\epsilon) n| \geq \epsilon n] 
\]
Using Chebyshev Inequality,
\[
 \mbox{Pr}[|(2 + 2\epsilon)I_r - (1+\epsilon) n| \geq \epsilon n] \leq \frac{\textbf{Var}[(2 + 2\epsilon) I_r]}{\epsilon^2 n^2} =  4\left(1 + \frac{1}{\epsilon} \right)^2\frac{\textbf{Var}[I_r]}{n^2}.
\]
From Lemma \ref{lem2}, this in turn implies
\[
 \mbox{Pr}[H_r > (2 + 2\epsilon) I_r] \leq  4\left(1 + \frac{1}{\epsilon} \right)^2 \frac{1}{2^{n+1}n}
\]
Hence, it follows that $\mbox{Pr[${\cal E}_{FEI}$]} \geq 1 - 2\left(1 + \frac{1}{\epsilon} \right)^2 \frac{1}{2^{n}n}
$ with constant $C=2 + 2\epsilon$ for arbitrary constant $\epsilon > 0$.
\end{proof}

\section{Conclusion}
In this paper we gave a simple proof of the fact a random function
will almost surely satisfy the FEI conjecture for $C = 2 + \delta$ for $\delta > 0$. Although
our proof is non-constructive, this is the only doubly exponential
sized family for which it is known that the FEI conjecture is true.

 It would be interesting to get a large ($\omega(2^{poly(n)})$) explicit family of functions that satisfy FEI conjecture. One possible candidate is the class of functions $f(x_1, x_2, \dots, x_p)$ which are invariant under the action of the cyclic permutation group $C_p\leq \textrm{Sym}(p)$ where $p$ is prime. It can be verified that the size of this class is $2^{\frac{2^p-2}{p}+2}$. Because of the highly structured nature of the functions which are invariant under $C_p$ it might be plausible to verify the conjecture for these functions. 

\section{Acknowledgement}
We thank Kunal Dutta and Justin Salez for pointing out that our result can be extended to a high probability statement.


\begin{thebibliography}{99}

\bibitem{FG96} E. Friedgut and G. Kalai. {\it Every monotone graph property has a sharp threshold },
 {\sf Proceedings of AMS, 124(10)}, 1996, pp. 2993 - 3002.

 \bibitem{CT91} T. M. Cover and J. A. Thomas.  {\it Elements of Information Theory},  {\sf John Wiley \& Sons, Inc., N. Y.}, 1991.

\bibitem{GK} G. Kalai, {\it The entropy/influence conjecture },\\ 
{\sf http://terrytao.wordpress.com/2007/08/16/gil-kalai-the-entropyinfluence-conjecture/}

\bibitem{KMS11} N. Keller, E. Mossel, and T. Schlank,
 {\it A Note on the Entropy/Influence Conjecture},
 {\sf arXiv:1105.2651v1}, 2011
 
\bibitem{KLW10} A. Klivans, H. Lee, and A. Wan. {\it Mansour's Conjecture is true for random DNF formulas},
 {\sf COLT}, 2010, pp. 368-380.

 \bibitem{M94} Y. Mansour, {\it Learning Boolean functions via the Fourier transform}, {\sf Kluwer 
Academic Publishers}, 1994, pp. 391-424.

 \bibitem{DWZ11} R. O'donnell, J. Wright and Y. Zhou,
 {\it The Fourier Entropy-Influence Conjecture for certain classes of Boolean functions},
 {\sf ICALP}, 2011, pp. 330-341.
 
\end{thebibliography}
\end{document}